\newcommand{\empt}{\varepsilon}
\newcommand{\NN}{\mathbb{N}}
\newcommand{\bs}{\boldsymbol{s}}
\newcommand{\bbf}{\boldsymbol{f}}
\theoremstyle{plain}
\newtheorem{theorem}{Theorem}
\newtheorem{lemma}[theorem]{Lemma}
\newtheorem{corollary}[theorem]{Corollary}
\newtheorem{conjecture}[theorem]{Conjecture}
\newtheorem{remark}[theorem]{Remark}
\theoremstyle{definition}
\newtheorem{example}[theorem]{Example}
\theoremstyle{remark}
\newtheorem*{note}{Note}
\begin{document}

\title{Counting Lyndon factors}

\author{Amy Glen}

\address{Amy Glen \newline
\indent School of Engineering \& Information Technology \newline
\indent Murdoch University \newline
\indent  90 South Street \newline
\indent Murdoch, WA 6150 AUSTRALIA}%
\email{\href{mailto:A.Glen@murdoch.edu.au}{A.Glen@murdoch.edu.au}}

\author{Jamie Simpson}

\address{Jamie Simpson \newline
\indent Department of Mathematics and Statistics \newline
\indent Curtin University \newline
\indent Bentley, WA 6102 AUSTRALIA}%
\email{\href{mailto:Jamie.Simpson@curtin.edu.au}{Jamie.Simpson@curtin.edu.au}}

\author{W. F. Smyth}
\address{W. F. Smyth \newline
\indent Department of Computing and Software \newline
\indent McMaster University \newline
\indent Hamilton, Ontario L8S4K1 CANADA}%
\email{\href{mailto:smyth@mcmaster.ca}{smyth@mcmaster.ca}}

\date{January 4, 2017}

\begin{abstract} In this paper, we determine the maximum number of distinct Lyndon factors that a word of length $n$ can contain. We also derive formulas for the expected total number of Lyndon factors in a word of length $n$ on an alphabet of size $\sigma$, as well as the expected number of distinct Lyndon factors in such a word. The minimum number of distinct Lyndon factors in a word of length $n$ is $1$ and the minimum total number is $n$, with both bounds being achieved by $x^n$ where $x$ is a letter. A more interesting question to ask is \textit{what is the minimum number of distinct Lyndon factors in a Lyndon word of length $n$?} In this direction, it is known \cite{kS14lynd} that an optimal lower bound for the number of distinct Lyndon factors in a Lyndon word of length $n$ is $\lceil\log_{\phi}(n) + 1\rceil$, 
where $\phi$ denotes the \textit{golden ratio} $(1 + \sqrt{5})/2$. Moreover, this lower bound is attained by the so-called finite \textit{Fibonacci Lyndon words}, which are precisely the Lyndon factors of the well-known \textit{infinite Fibonacci word} $\bbf$ --- a special example of a \textit{infinite Sturmian word}. Saari \cite{kS14lynd} conjectured that if $w$ is Lyndon word of length $n$, $n\ne 6$, containing the least number of distinct Lyndon factors over all Lyndon words of the same length, then $w$ is a \textit{Christoffel word} (i.e., a Lyndon factor of an infinite Sturmian word). We give a counterexample to this conjecture. Furthermore, we generalise Saari's result on the number of distinct Lyndon factors of a Fibonacci Lyndon word by determining the number of distinct Lyndon factors of a given Christoffel word. We end with two open problems.
%We conjecture that  the minimum number of distinct Lyndon factors over all Lyndon words of the same length is attained by at least one Christoffel word, and often by other non-Christoffel words.
\end{abstract}

\subjclass[2000]{68R15}

\keywords{Lyndon word; Sturmian word; Fibonacci word; Christoffel word}

\maketitle

\section{Introduction}
\label{sect-1}

This paper is concerned with counting Lyndon words occurring in a given word of length~$n$.
%We consider four problems related to the frequency of occurrence of Lyndon words in given words of length $n$.

First, let us recall some terminology and notation from combinatorics on words (see, e.g., \cite{L1,mL02alge}).  A \textit{word} is a (possibly empty) finite or infinite sequence of symbols,
called \textit{letters}, drawn from a given finite set $\Sigma$, called an \textit{alphabet}, of size $\sigma = |\Sigma|$. A finite word $w:=x_1x_2\cdots x_n$ with each $x_i  \in \Sigma$ is said to have \textit{length} $n$, written $|w| = n$. The {\em empty word} is the unique word of length $0$, denoted by $\varepsilon$. The set of all finite words over $\Sigma$ (including the empty word) is denoted by $\Sigma^*$, and for each integer $n \geq 2$, the set of all words of length $n$ over $\Sigma$ is denoted by $\Sigma^n$. 

A finite word $z$ is said to be a \textit{factor} of a given finite word $w$ if there exist words $u$, $v$ such that $w = uzv$. If $u=\empt$, then $z$ is said to be a \textit{prefix} of $w$, and if $v=\empt$, then $z$ is said to be a \textit{suffix} of $w$. If both $u$ and $v$ are non-empty, we say that $z$ is a \textit{proper factor} of $w$. A prefix (respectively, suffix) of $w$ that is not equal to $w$ itself is said to be a \textit{proper prefix} (respectively, \textit{proper suffix}) of $w$. A factor of an infinite word is a finite word that occurs within it. 

A non-empty word $x$ that is both a proper prefix and a proper suffix of a finite word $w$ is said to be a \textit{border} of $w$. We say that a word which has only an empty border is \textit{borderless}. If, for some word~$x$, $w = xx\cdots x$ ($k$ times for some integer $k \ge 1$), we write $w = x^k$, and $w$ is called the \textit{$k$-th power of $x$}. A non-empty finite word is said to be \textit{primitive} if it is not a power of a shorter word. Two finite words $u$, $v$ are said to be \textit{conjugate} if there exist words $x, y$ such that $u=xy$ and $v=yx$. Accordingly, conjugate words are cyclic shifts of one another, and thus conjugacy is an equivalence relation. A primitive word of length $n$ has exactly $n$ distinct conjugates. For example, the primitive word $abacaba$ of length $7$ has $7$ distinct conjugates; namely, itself and the six words $bacabaa$, $acabaab$, $cabaaba$, $abaabac$, $baabaca$, $aabacab$. The set of all conjugates of a finite word $w$ is called the \textit{conjugacy class} of $w$.

In this paper we consider only words on an ordered alphabet $\Sigma = \{a_1,a_2,\ldots,a_{\sigma}\}$ where $a_1 < a_2 < \cdots < a_{\sigma}$. This total order on $\Sigma$ naturally induces a \textit{lexicographical order} (i.e., an alphabetical order) on the set of all finite words over $\Sigma$. A \textit{Lyndon word} over $\Sigma$ is a non-empty primitive word that is the lexicographically least word in its conjugacy class, i.e., $w \in \Sigma$ or $w < vu$ for all non-empty words $u, v$ such that $w = uv$ (e.g., see \cite{L1}). Equivalently, a non-empty finite word $w$ over $\Sigma$ is Lyndon if and only if $w \in \Sigma$ or $w < v$ for all proper suffixes $v$ of $w$ \cite{D83}. Note, in particular, that there is a unique Lyndon word in the conjugacy class of any given primitive word. For example, $aabacab$ is the unique Lyndon conjugate of the primitive word $abacaba$. Lyndon words are named after R.C.~Lyndon \cite{rL54onbu}, who introduced them in 1954 under the name of ``standard lexicographic sequences''. Such words are well known to be borderless \cite{D83}.

We begin in Section~\ref{sect-2} by computing $D(\sigma,n)$,
the maximum number of distinct Lyndon factors in a word of length $n$
on an alphabet $\Sigma$ of size $\sigma$. In Section~\ref{sect-3} we compute $ET(\sigma,n)$,
the expected total number of Lyndon factors (that is, counted according to their multiplicity) in a word of length $n$ over $\Sigma$,
while Section~\ref{sect-4} computes $ED(\sigma,n)$,
the expected number of distinct Lyndon factors in word of length $n$ over $\Sigma$. Section~\ref{sect-5} considers distinct Lyndon factors in a Lyndon word of length $n$; in particular, we generalise a result of Saari \cite{kS14lynd} on the number of distinct Lyndon factors of a \textit{Fibonacci Lyndon word} by determining the number of distinct Lyndon factors of a given \textit{Christoffel word} (i.e., a Lyndon factor of an infinite Sturmian word --- to be defined later). Lastly, in Section~\ref{sect-6}, we state some open problems.

\section{The maximum number of distinct Lyndon factors in a word}
\label{sect-2}

Let $D(\sigma,n)$ be the maximum number of distinct Lyndon factors in a word of length $n$ on the alphabet $\Sigma =  \{a_1,a_2,\ldots,a_{\sigma}\}$. 
We want to find a word that achieves $D(\sigma,n)$, given $\sigma$ and $n$.
It is clear that a necessary condition for attaining the maximum is that $w$
takes the form $a_1^{k_1}a_2^{k_2} \dots a_{\sigma}^{k_{\sigma}}$. 
This word contains ${n+1} \choose 2$ factors of lengths $1,2,\ldots,n$,
of which each is a Lyndon word except those of the form $a_i^k,\ k > 1$.
The number of powers of each $a_i$ is ${k_i + 1} \choose 2$, including $a_i$ itself. 
The total number of Lyndon factors in $w$ is therefore
\begin{equation}\label{e1}
{n+1 \choose 2} - \sum_{i=1}^{\sigma} {k_i + 1 \choose 2} + \sigma,
\end{equation}

where the final $\sigma$ counts the single letters $a_i$.
We claim that the summation is minimised when the $k_i$  differ by at most one.
Suppose to the contrary that $k_j=k_i+s$ for some $i$, $j$
and $s \ge 2$.  It is easily checked that
$${k_i+1+s \choose 2} + {k_i+1 \choose 2} > {k_i+s \choose 2} + {k_i+2 \choose 2}$$
for $s \ge 2$. 
Thus the summation term will be minimised when each $k_i$ equals either
$\lfloor n/\sigma \rfloor$ or $\lceil n/\sigma \rceil$.
If $n=m\sigma +p$, where $0 < p <\sigma$, then $\lfloor n/\sigma \rfloor=m$ and
$\lceil n/\sigma \rceil= m+1$.
If $p=0$ then each $k_i$ equals $m$.  We therefore have the following result.
% $$\left ( \begin{array}{c} k_i+1+s \\ 2 \end{array} \right )+\left ( \begin{array}{c} k_i+1 \\ 2 \end{array} \right ) > \left ( \begin{array}{c} k_i +s \\ 2 \end{array} \right )+\left ( \begin{array}{c} k_i +2\\ 2 \end{array} \right )$$ for $s \ge 2$.  Thus the summation term will be minimsed when each $k_i$ equals either $\lfloor n/\sigma \rfloor$ or $\lceil n/\sigma \rceil$.  If $n=m\sigma +p$, where $0 < p <\sigma$, then $\lfloor n/\sigma \rfloor=m$ and $\lceil n/\sigma \rceil= m+1$.  If $p=0$ then each $k_i$ equals $m$.  We therefore have the following result.

\begin{theorem} \label{T1} If $n=m\sigma +p$, where $0 \le p <\sigma$, then
\begin{equation}
\label{e2}
D(\sigma,n) = {n+1 \choose 2} - (\sigma-p){m+1 \choose 2} - p{m+2 \choose 2} + \sigma
\end{equation}
and the maximum is attained using
% $$D(\sigma,n)=\frac{n(n+1)}{2}-(\sigma-p)\left ( \begin{array}{c} m+1 \\ 2 \end{array} \right )-p\left ( \begin{array}{c} m+2 \\ 2 \end{array} \right )+\sigma$$ and the maximum is attained using
% BILL: this equation is labelled but not accessed, so I removed the label
$$w=a_1^m \dots a_{n-p}^m a_{n-p+1}^{m+1} \dots a_n^{m+1}.$$
\end{theorem}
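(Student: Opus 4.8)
The plan is simply to finish the computation that the preceding discussion has already set up. By that discussion we may take an extremal $w$ to have block form $w = a_1^{k_1}a_2^{k_2}\cdots a_\sigma^{k_\sigma}$ with $k_i \ge 1$ and $\sum_{i=1}^{\sigma} k_i = n$; the number of distinct Lyndon factors of such a $w$ is given exactly by \eqref{e1}; and, by the inequality proved above, \eqref{e1} is maximised precisely when $\sum_{i=1}^{\sigma}\binom{k_i+1}{2}$ is minimised, which forces the $k_i$ to differ pairwise by at most $1$. (Throughout I assume $n \ge \sigma$, so that a block word with every $k_i \ge 1$ exists; the remaining case $n < \sigma$ is immediate, since then $w = a_1a_2\cdots a_n$ is optimal — each of its $\binom{n+1}{2}$ factors is strictly increasing, hence Lyndon, and pairwise distinct — giving $D(\sigma,n) = \binom{n+1}{2}$.)

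First I would pin down the optimal multiset of block lengths. Write $n = m\sigma + p$ with $0 \le p < \sigma$. A $\sigma$-tuple of positive integers summing to $n$ whose entries pairwise differ by at most $1$ must use only the values $m$ and $m+1$, and matching the total forces exactly $p$ entries equal to $m+1$ and the other $\sigma - p$ entries equal to $m$ (when $p = 0$ all entries equal $m$, and here $m \ge 1$). This is nothing more than the Euclidean-division identity $n = m\sigma + p$ and needs no real argument.

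Next I would substitute. With $\sigma - p$ of the $k_i$ equal to $m$ and $p$ of them equal to $m+1$,
\[
\sum_{i=1}^{\sigma}\binom{k_i+1}{2} \;=\; (\sigma-p)\binom{m+1}{2} + p\binom{m+2}{2},
\]
and inserting this into \eqref{e1} yields precisely \eqref{e2}. Finally, since \eqref{e1} is symmetric in the $k_i$ and is an exact count for every block word, any word realising the optimal multiset of block lengths attains the maximum; listing the shorter blocks first gives the stated witness $w = a_1^m\cdots a_{\sigma-p}^m\,a_{\sigma-p+1}^{m+1}\cdots a_\sigma^{m+1}$, which therefore achieves $D(\sigma,n)$.

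I do not expect a genuinely hard step here: the substantive ingredients — the reduction to block words, the derivation of \eqref{e1}, and the exchange argument that pins the $k_i$ down to be as equal as possible — have already been carried out, and what remains is bookkeeping. The two points I would take care to spell out in the final write-up are the reduction to block-form words (asserted above as ``clear'', but worth a sentence of justification, e.g.\ that every non-power factor of a block word is Lyndon and occurs exactly once) and the trivial range $n < \sigma$ noted above.
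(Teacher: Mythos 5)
Your proposal follows the paper's own (implicit) proof exactly --- the reduction to block words $a_1^{k_1}\cdots a_\sigma^{k_\sigma}$, the count \eqref{e1}, the exchange argument forcing the $k_i$ to differ by at most one, and then the substitution giving \eqref{e2} together with the witness word --- so it is correct and takes essentially the same route, merely finishing the bookkeeping the paper leaves before the theorem statement. Your side remark on $n<\sigma$ is in fact a refinement rather than a gap: in that range the stated formula \eqref{e2} overcounts by $\sigma-n$ (it credits all $\sigma$ single letters, as the table entries such as $D(5,3)=8>\binom{4}{2}$ show), and your value $\binom{n+1}{2}$ is the correct one, so the theorem should be read with the tacit assumption $n\ge\sigma$.
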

\begin{corollary}
If $n=m\sigma $ then
$$D(\sigma,n) = {\sigma \choose 2} m^2 + \sigma.$$
\end{corollary}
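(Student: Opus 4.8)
The plan is to specialize Theorem~\ref{T1} to the case $p=0$ and then simplify the resulting binomial expression algebraically. When $n=m\sigma$, we have $p=0$, so the term $p\binom{m+2}{2}$ vanishes and the coefficient $(\sigma-p)$ becomes $\sigma$; hence \eqref{e2} reduces to $D(\sigma,n) = \binom{n+1}{2} - \sigma\binom{m+1}{2} + \sigma$. First I would substitute $n=m\sigma$ into $\binom{n+1}{2} = \frac{n(n+1)}{2} = \frac{m\sigma(m\sigma+1)}{2}$ and expand $\sigma\binom{m+1}{2} = \sigma\cdot\frac{m(m+1)}{2} = \frac{\sigma m(m+1)}{2}$.

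Next I would combine the two fractions over the common denominator $2$: the numerator of $\binom{n+1}{2} - \sigma\binom{m+1}{2}$ becomes $m\sigma(m\sigma+1) - \sigma m(m+1) = m\sigma\big[(m\sigma+1) - (m+1)\big] = m\sigma(m\sigma - m) = m^2\sigma(\sigma-1)$. Dividing by $2$ gives $\frac{m^2\sigma(\sigma-1)}{2} = \binom{\sigma}{2}m^2$, and adding back the $+\sigma$ yields exactly $D(\sigma,n) = \binom{\sigma}{2}m^2 + \sigma$, as claimed.

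There is essentially no obstacle here: the corollary is a routine algebraic simplification of the master formula in Theorem~\ref{T1}, and the only thing to be careful about is correctly tracking that $p=0$ forces both the disappearance of one binomial term and the collapse of the coefficient $\sigma-p$ to $\sigma$. One could alternatively give a direct combinatorial argument: the extremal word is $w = a_1^m a_2^m \cdots a_\sigma^m$, which has $\binom{n+1}{2}$ factors in total, and among these the non-Lyndon factors are precisely the powers $a_i^k$ with $k>1$; each block contributes $\binom{m+1}{2}$ powers $a_i^k$ with $k\ge 1$, of which $\binom{m+1}{2}-1$ are non-Lyndon, so the count of Lyndon factors is $\binom{n+1}{2} - \sigma\big(\binom{m+1}{2}-1\big)$, and the same arithmetic as above finishes it. I would present the short algebraic derivation as the proof.
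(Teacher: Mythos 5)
Your proposal is correct and follows essentially the same route as the paper: specialize Theorem~\ref{T1} to $p=0$ and simplify $\binom{n+1}{2} - \sigma\binom{m+1}{2} + \sigma$ algebraically to $\binom{\sigma}{2}m^2 + \sigma$. The arithmetic checks out and matches the paper's own computation.
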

\begin{proof}
If $n=m\sigma$, Theorem~\ref{T1} gives
\begin{eqnarray*}
D(\sigma,n) &=& {n+1 \choose 2} - \sigma{m+1 \choose 2} + \sigma \\
&=& \frac{\sigma}{2}(m (m \sigma+1)-(m+1)m )+\sigma\\
&=& {\sigma \choose 2} m^2+\sigma \qquad \mbox{as required}.
\end{eqnarray*}  \vspace*{-2em}
\end{proof}

The following table shows values of $D(\sigma,n)$ for low values of $n$.

  \begin{table}[htb!]
 \centering
 \begin{tabular}{rrrr}
 \hline
% BILL: Now that we have names, we can use them!
% BILL: Also, why not at least do sigma=5 in this table???
 $n$ & $D(2,n)$  &  $D(5,n)$ & $D(10,n)$ \\
% $n$ & $M(2,n)$  & $ET(2,n)$\\
 \hline
 1  &      2  &     5  &    10\\
   2  &      3  &     6  &    11\\
   3  &      4  &     8  &    13\\
   4  &      6  &    11  &    16\\
   5  &      8  &    15  &    20\\
   6  &     11  &    19  &    25\\
   7  &     14  &    24  &    31\\
   8  &     18  &    30  &    38\\
   9  &     22  &    37  &    46\\
  10  &     27  &    45  &    55\\
  15  &     58  &    95  &   110\\
  20  &    102  &   165  &   190\\
  25  &    158  &   255  &   290\\
  30  &    227  &   365  &   415\\

\end{tabular}
\caption{\label{table1} {\small The maximum number of distinct  Lyndon factors that can appear in words of length $n$.}}
  \end{table}

%\vspace*{-2em}

\section{The expected total number of Lyndon factors in a word}
\label{sect-3}

We now wish to calculate the total number $M(\sigma,n)$ of Lyndon factors (that is, counted according to multiplicity) appearing in all words in $\Sigma^n$. Consider a Lyndon word $L$ of length $m \le n$ and a position $i$, $1 \le i \le n-m+1$, in words of length $n$. Words containing $L$ starting at position $i$ have the form $xLy$ where $xy$ is any word on $\Sigma$ with length $n-m$. Thus there will be $\sigma^{n-m}$ words in $\Sigma^n$ which contain $L$ in this position. This will be the same for any of the $n-m+1$ possible values of $i$ so in the $\sigma^n$ words in $\Sigma^n$ there will be $(n-m+1)\sigma^{n-m}$ appearances of $L$. This is the same for all Lyndon words of this length.  The number of such Lyndon words is $1/m$ of the number of primitive words of this length, since exactly one conjugate of each primitive word is Lyndon.  The number of primitive words of length $n$ (\cite{L1}, equation (1.3.7)) is
\[
\sum_{d | m} \mu\left(\frac{m}{d}\right) \sigma^d
\]
where $\mu$ is the M\"{o}bius function.  To get the total number of Lyndon factors appearing in $\Sigma^n$,
we sum over possible values of $m$:
\begin{equation}
M(\sigma,n) = \sum_{m=1}^n \frac{n-m+1}{m} \sigma^{n-m}\sum_{d | m} \mu\left(\frac{m}{d}\right) \sigma^d.
\end{equation}
Dividing by $\sigma^n$ gives the expected total number
$ET(\sigma,n) := M(\sigma,n)/\sigma^n$ of Lyndon factors
in a word of length $n$ on the alphabet $\Sigma$.
Table \ref{table2} below shows values for $\sigma=2,5$ and low values of $n$.
% To get the total number of Lyndon factors appearing in $\Sigma^n$ we sum  over possible values of $m$, giving $$M(\sigma,n) = \sum_{m=1}^n \frac{n-m+1}{m} \sigma^{n-m}\sum_{d | m} \mu(\frac{m}{d}) \sigma^d.$$  Dividing this by $\sigma^n$ will give the expected total number of Lyndon factors in a word of length $n$ on the alphabet $\Sigma$.  The table below shows values for $\sigma=2$ and some low values of $n$.

  \begin{table}[htb!]
 \centering
 \begin{tabular}{rrr|rr}
 \hline

 $n$ & $M(2,n)$  & $ET(2,n)$ & $M(5,n)$ & $ET(5,n)$ \\
% $n$ & $M(2,n)$  & $ET(2,n)$\\
 \hline
1 & 2 & 1.00 & 5 & 1.00\\
2& 9& 2.25 & 60 & 2.40\\
3& 30& 3.75 &  515 & 4.12\\
4& 87& 5.43 & 3800 & 6.08\\
5& 234& 7.31 & 25749 & 8.24\\
6& 597& 9.32 & 165070 & 10.56\\
7& 1470& 11.48 & 1018135 & 13.03\\
8& 3522& 13.76 & 6103350 & 15.62\\
9& 8264& 16.14 & 35797125  & 18.33\\
10& 19067& 18.62  & 206363748 & 21.13\\
\end{tabular}

\caption{\label{table2} {\small Values of the total number $M(\sigma,n)$ of Lyndon factors appearing in all words of length $n$ and the expected total number $ET(\sigma,n)$ of Lyndon factors in a word of length $n$ on an alphabet of size $\sigma$ for $\sigma=2,5$ and $n=1,2,\ldots,10$.}}
  \end{table}

\section{The expected number of distinct Lyndon factors in a word}
\label{sect-4}

We use the notation from above, with $[n]$ being the set $\{1,2,\dots,n\}$.  Most of the following analysis counts the number of words in $\Sigma^n$ that contain at least one factor equal to a specific Lyndon word~$L$.  At the end we sum over all possible $L$.  Let $S$ be a non-empty set of positions in a word $w$ and let $P(L,S,w)=1$ if $w$ contains factors equal to $L$ at each position in $w$ beginning at a position in the set $S$, and 0 otherwise.  Note that $w$ may contain other factors equal to $L$.  We claim that

\begin{equation} \label{e3} \sum_{s=1}^n (-1)^{s+1} \sum_{S \subseteq [n],|S|=s}P(L,S,w)= \begin{cases}
\text{$1$ if $w$ contains at least one factor equal to $L$}, \\ \text{$0$ otherwise}. \end{cases}
 \end{equation}  If $w$ contains no factor equal to $L$ then $P(L,S,w)$ equals 0 for all $S$ so the ``otherwise'' part of the claim holds.
Suppose $w$ contains copies of $L$ beginning at positions in $T=\{i_1,i_2,\dots,i_t\}$ and nowhere else. Then $P(L,S,w)$ equals 1 if and only if $S$ is any non-empty subset of $T$, so the left hand side of (\ref{e3}) becomes
\begin{eqnarray*}
&&\sum_{s=1}^t (-1)^{s+1} |\{S\subseteq T:|S|=s\}|\\
&=&\sum_{s=1}^t (-1)^{s+1} \left  ( \begin{array}{c} t\\s \end{array} \right )\\
&=&\sum_{s=0}^t (-1)^{s+1} \left  ( \begin{array}{c} t\\s \end{array} \right )+1.
\end{eqnarray*}  This equals 1 since the final sum is the binomial expansion of $(1-1)^t$.  The number of words  in $\Sigma^n$ which contain at least one factor equal to $L$ is therefore
\begin{eqnarray}
\nonumber &&\sum_{w \in \Sigma^n}\sum_{s=1}^n  (-1)^{s+1} \sum_{S \subseteq [n],|S|=s} P(L,S,w)\\
\label{e4} &=& \sum_{s=1}^n  (-1)^{s+1} \sum_{S \subseteq [n],|S|=s}\sum_{w \in A^n} P(L,S,w).
\end{eqnarray}

We now evaluate $\sum_{w \in \Sigma^n} P(L,S,w)$.    This is counting the words in $\Sigma^n$ which have factors $L$ beginning at positions $i \in S$. It clearly equals 0 if $s|L|>n$ since then there is no room in $w$ for $s$ factors $L$ (recalling that $L$ is Lyndon, therefore borderless, and therefore cannot intersect a copy of itself). We also need the members of $S$ to be separated by at least $|L|$. The number of such sets $S$ is $${{n-s|L|+s} \choose {s}}.$$  Once $S$ is chosen there are $\sigma^{n-s|L|}$ ways of choosing the letters in $w$ which are not in the specified factors $L$.  Thus

% BILL: label not used
\begin{equation*}\label{e5}
\sum_{w \in \Sigma^n} P(L,S,w) = {{n-s|L|+s} \choose {s}} \sigma^{n-s|L|}.
% \sum_{w \in \Sigma^n} P(L,S,w)=\left  ( \begin{array}{c} n-s|L|+s\\s \end{array} \right )\sigma^{n-s|L|}.
\end{equation*}
Substituting in (\ref{e4}) we see that the number of words in $\Sigma^n$ which contain at least one occurrence of $L$ is
$$\sum_{s=1}^{\lfloor n/|L|\rfloor}  (-1)^{s+1} {{n-s|L|+s} \choose s} \sigma^{n-s|L|}.$$
To get the expected number $ED(\sigma,n)$ of distinct Lyndon factors in a word of length $n$,
we sum this over all $L$ with length at most $n$,
using the same technique as in the previous section,
and divide by~$\sigma^n$.
Replacing $|L|$ with $m$ we get the following:
% $$  \sum_{s=1}^{\lfloor n/|L|\rfloor}  (-1)^{s+1}   \left  ( \begin{array}{c} n-s|L|+s\\s \end{array} \right )\sigma^{n-s|L|}.$$  To get the expected number of distinct Lyndon factors in a word of length $n$ we sum this over all $L$ with length at most $n$, using the same technique as in the previous section and divide by $\sigma^n$. Replacing $|L|$ with $m$ we get the following:
\begin{equation}\label{e6}
ED(\sigma,n) = \sum_{m=1}^n \frac{1}{m}\sum_{d | m} \mu\left(\frac{m}{d}\right) \sigma^d \sum_{s=1}^{\lfloor n/m\rfloor}  (-1)^{s+1} {n-sm+s \choose s} \sigma^{-sm}.
% \sum_{m=1}^n \frac{1}{m}\sum_{d | m} \mu(\frac{m}{d}) \sigma^d \sum_{s=1}^{\lfloor n/m\rfloor}  (-1)^{s+1}   \left  ( \begin{array}{c} n-sm+s\\s \end{array} \right )\sigma^{-sm}.
\end{equation}

The following table shows values of $ED(\sigma,n)$ for low values of $n$ and several values of $\sigma$.

{\small
 \begin{table}[htb!]
 \centering
 \begin{tabular}{rrrrr}
 \hline
  $n$ & $\sigma = 2$ & 5 & 10 & 20 \\
  \hline
  1  &    1.00   &   1.00   &   1.00    &    1.00  \\
 2  &    1.75   &   2.20   &   2.35    &    2.42  \\
 3  &    2.50   &   3.56   &   3.94    &    4.14  \\
 4  &    3.25   &   5.02   &   5.69    &    6.05  \\
 5  &    4.06   &   6.55   &   7.57    &    8.12  \\
 6  &    4.91   &   8.16   &   9.54    &   10.31  \\
 7  &    5.81   &   9.82   &  11.59    &   12.61  \\
 8  &    6.77   &  11.54   &  13.70    &   14.99  \\
 9  &    7.77   &  13.31   &  15.88    &   17.45  \\
10  &    8.83   &  15.13   &  18.11    &   19.97  \\
15  &   14.77   &  24.93   &  29.90    &   33.36  \\
20  &   21.67   &  35.76   &  42.58    &   47.70  \\
25  &   29.35   &  47.43   &  56.02    &   62.73  \\
30  &   37.70   &  59.82   &  70.11    &   78.33  \\

\end{tabular}
\medskip
\caption{\label{table3}{\small The expected number $ED(\sigma,n)$ of distinct Lyndon factors in a word of length $n$ for alphabets of size $\sigma = 2, 5, 10, 20$.}}
  \end{table}
  }

\vspace*{-2em}
\section{Distinct Lyndon factors in a Lyndon word}
\label{sect-5}

Minimising the number of Lyndon factors over words of length $n$ is not very interesting:  the minimum number of distinct Lyndon factors is $1$ and the minimum total number is $n$. Both bounds are achieved by $x^n$ where $x$ is a letter.  A more interesting question has been studied by Saari \cite{kS14lynd}: \textit{what is the minimum number of  distinct Lyndon factors in a Lyndon word of length $n$?}  He proved that an optimal lower bound for the number of distinct Lyndon factors in a Lyndon word of length $n$ is 
\[
\lceil\log_{\phi}(n) + 1\rceil
\]
where $\phi$ denotes the \textit{golden ratio} $(1 + \sqrt{5})/2$. Moreover, this lower bound is attained by the so-called finite \textit{Fibonacci Lyndon words}, which are precisely the Lyndon factors of the well-known \textit{infinite Fibonacci word} $\bbf$ ---  a special example of a \textit{characteristic Sturmian word}.

Following the notation and terminology in \cite[Ch.~2]{mL02alge}, an infinite word $\bs$ over $\{a,b\}$ is \textit{Sturmian} if and only if there exists an irrational
$\alpha \in (0,1)$, and a real number $\rho$, such that $\bs$ is one
of the following two infinite words:
\[
  \bs_{\alpha,\rho}, ~\bs_{\alpha,\rho}^{\prime}: \NN \longrightarrow \{a,b\}
\]
defined by
\[
 \begin{matrix}
  &\bs_{\alpha,\rho}[n] = \begin{cases}
                        a    ~~~~~\mbox{if} ~\lfloor(n+1)\alpha + \rho\rfloor -
                        \lfloor n\alpha + \rho\rfloor = 0, \\
                        b    ~~~~~\mbox{otherwise};
                       \end{cases} \\
  &\qquad \\
  &\bs_{\alpha,\rho}^\prime[n] = \begin{cases}
                        a    ~~~~~\mbox{if} ~\lceil(n+1)\alpha + \rho\rceil -
                        \lceil n\alpha + \rho\rceil = 0, \\
                        b    ~~~~~\mbox{otherwise}.
                       \end{cases}
 \end{matrix} \qquad (n \geq 0)
\]

The irrational $\alpha$ is called the \emph{slope} of $\bs$ and
$\rho$ is the \emph{intercept}. If $\rho = 0$, we have
\[
\bs_{\alpha,0} = ac_{\alpha} \quad \mbox{and} \quad \bs_{\alpha,0}^\prime =
  bc_{\alpha}
\]
where $c_{\alpha}$ is called the \emph{characteristic Sturmian word} of slope $\alpha$. Sturmian words of the same slope have the same set of factors \cite[Prop.~2.1.18]{mL02alge}, so when studying the factors of Sturmian words, it suffices to consider only the characteristic ones.

The \textit{infinite Fibonacci word $\bbf$} is the characteristic Sturmian word of slope $\alpha = (3 - \sqrt5)/2$. It can be constructed as the limit of an infinite sequence of so-called \textit{finite Fibonacci words} $\{f_n\}_{n\ge 1}$, defined by:
\[
f_{-1} = b, \quad f_0 = a, \quad f_n = f_{n-1}f_{n-2} \quad \mbox{for} \quad  n \geq 1.
\]
That is, $f_1 = ab$, $f_2 = aba$, $f_3 =abaab$, $f_4=abaababa$, $f_5=abaababaabaab$, etc. (where $f_n$ is a prefix of $f_{n+1}$ for each $n\geq 1$), and we have
\[
\bbf = \lim_{n\to\infty} f_n = abaababaabaab\cdots
\]
\begin{note} The length of the $n$-th finite Fibonacci word $f_n$ is the $n$-th \textit{Fibonacci number} $F_n$, defined by: $F_{-1}=1, F_0 =1, F_n = F_{n-1} + F_{n-2}$ for $n \geq 1$.
\end{note}

More generally, any characteristic Sturmian word can be constructed as the limit of an infinite sequence of finite words. To this end, we recall that every irrational $\alpha \in (0,1)$ has a unique simple
continued fraction expansion:  \vspace*{-1em}
\[ %\label{e:CF}
  \alpha = [0;a_1,a_2,a_3,\ldots] = \cfrac{1}{a_1+
                                \cfrac{1}{a_2 +
                                \cfrac{1}{a_3 + \cdots
                                 }}}
\]
where each $a_i$ is a positive integer. The $n$-th \emph{convergent} of $\alpha$ is defined by
\[
  \frac{p_n}{q_n} = [0;a_1,a_2,\ldots,a_n] \quad \mbox{for all} ~n\geq 1,
\]
where the sequences $\{p_n\}_{n\geq0}$ and $\{q_n\}_{n\geq0}$ are
given by
\[
\begin{matrix}
 &p_{0} = 0, &p_{1} = 1, &p_n = a_np_{n-1} + p_{n-2}, ~~&n\geq 2 \\
% \mbox{and} &~ &~ &~ &~ \\
 &q_{0} = 1, &q_{1} = a_1, &q_n = a_nq_{n-1} + q_{n-2}, ~~&n\geq 2
\end{matrix}
\]

Suppose $\alpha = [0;1+d_1,d_2,d_3, \ldots]$, with $d_1 \geq 0$
and all other $d_n > 0$. To the \emph{directive sequence}
$(d_1,d_2,d_3,\ldots)$, we associate a sequence $\{s_n\}_{n \geq
-1}$ of words defined by
\[
  s_{-1} = b, ~s_{0} = a, ~s_{n} = s_{n-1}^{d_{n}}s_{n-2} \quad \mbox{for} \quad n \geq 1.
\]
Such a sequence of words is called a \emph{standard sequence}, and
we have
\[
  |s_n| = q_n \quad \mbox{for all} ~n\geq0.
\]
Note that $ab$ is a suffix of $s_{2n-1}$ and $ba$ is a suffix of
$s_{2n}$ for all $n \geq 1$.

Standard sequences are related to characteristic Sturmian words in
the following way. Observe that, for any $n\geq0$, $s_n$ is a
prefix of $s_{n+1}$, which gives obvious meaning to $\lim_{n\to\infty} s_n$ as an infinite word. In
fact, one can prove \cite{aFmMuT78dete,tB93desc} that each $s_n$
is a prefix of $c_\alpha$, and we have
\[
c_{\alpha} = \underset{n \rightarrow \infty}{\mbox{lim}}s_n.
\]

The following lemma collects together some properties of the \textit{standard words} $s_n$. Note that from now on when referring to Lyndon words over the alphabet $\{a,b\}$ we assume the natural order~$a < b$.

\begin{lemma} \label{L:s_n} Let $c_\alpha$ be the characteristic Sturmian word of slope $\alpha = [0; 1+d_1, d_2, d_3, \ldots]$ with $c_\alpha = \displaystyle \lim_{n\to\infty}s_n$ where the words $s_n$ are defined as above.
\begin{itemize}
\item For all $n \geq 1$, $s_n$ is a primitive word \cite{aDfM94some}.
\item For all $n \geq 1$, there exist uniquely determined palindromes $u_n$, $v_n$, $p_n$ such that
\[
s_n = u_nv_n = \begin{cases} p_nab \quad \mbox{if $n$ is odd}, \\
p_nba \quad \mbox{if $n$ is even},
\end{cases}
\]
where $|u_n| = q_{n-1} - 2$ and $|v_{n}| = q_n - q_{n-1} + 2$. \cite{aDfM94some}
\item For all $n \geq 1$, the reversal of $s_n$ is the $(q_n - 2)$-nd conjugate of $s_n$, and hence the conjugacy class of $s_n$ is closed under reversal. \cite[Prop.~2.9(4)]{aG06onst}
\item The Lyndon factors of $c_\alpha$ of length at least $2$ are precisely the Lyndon conjugates of the (primitive) standard words $s_n$ for all $n \geq 1$.%; namely $ap_nb$ for all $n\geq 1$.
\cite{jBaD97stur,aD97stur} %,aDfM94some}
\end{itemize}
\end{lemma}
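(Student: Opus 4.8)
The lemma collects four facts about the standard words $s_n$, all available in the cited references; the plan is to indicate how each could be reconstructed, the first three being quick consequences of the recursion $s_n=s_{n-1}^{d_n}s_{n-2}$ and the fourth carrying the real weight. For item~1 I would induct on $n$: $s_{-1},s_0,s_1$ are trivially primitive, and if $s_n=s_{n-1}^{d_n}s_{n-2}=z^k$ with $k\geq 2$, then using $\gcd(q_{n-1},q_{n-2})=1$ together with the classical fact that $s_{n-1}s_{n-2}$ and $s_{n-2}s_{n-1}$ coincide except in their last two letters (one ending in $ab$, the other in $ba$) one derives a contradiction; see \cite{aDfM94some}. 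For item~2, write $s_n=p_n\cdot ab$ when $n$ is odd and $s_n=p_n\cdot ba$ when $n$ is even; an induction on the recursion, in the language of iterated palindromic closure, shows that $p_n$ is a palindrome of length $q_n-2$ (a \emph{central word}), and that any central word has a canonical factorisation as a product of two palindromes, here of lengths $q_{n-1}-2$ and $q_n-q_{n-1}+2$, which gives $s_n=u_nv_n$ (\cite{aDfM94some}). For item~3, since $p_n$ is a palindrome, $\widetilde{s_n}=\widetilde{ab}\,\widetilde{p_n}=ba\,p_n$ (say $n$ odd), which is a cyclic shift of $p_n\,ba$, and $p_n\,ba$ is itself a cyclic shift of $p_n\,ab=s_n$ because $wab$ and $wba$ are conjugate for every central word $w$; composing the two shifts identifies $\widetilde{s_n}$ with the conjugate of $s_n$ recorded in \cite{aG06onst}, and in particular shows the conjugacy class of $s_n$ is closed under reversal.

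Item~4 is the substantive one, and I would prove the two inclusions separately. \emph{Every Lyndon conjugate of $s_n$, $n\geq 1$, is a factor of $c_\alpha$:} $s_n$ is a prefix of $c_\alpha$, hence a factor, and moreover the $q_n+1$ factors of $c_\alpha$ of length $q_n$ are exactly the $q_n$ conjugates of $s_n$ together with one further word --- a fact one reads off either from the rotation model of Sturmian words (via the three-distance theorem applied to the partition of $\mathbb{R}/\mathbb{Z}$ by $0,\{-\alpha\},\dots,\{-q_n\alpha\}$) or combinatorially by induction from $s_{n+1}=s_n^{d_{n+1}}s_{n-1}$; since by item~1 the Lyndon conjugate of $s_n$ exists uniquely and is a conjugate of $s_n$, it lies among these factors. \emph{Every Lyndon factor of $c_\alpha$ of length at least $2$ is a Lyndon conjugate of some $s_n$:} let $\ell$ be such a factor. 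As $c_\alpha$ is Sturmian it is balanced \cite{mL02alge}, so every factor of $c_\alpha$ --- in particular $\ell$ --- is balanced; a balanced Lyndon word is a lower Christoffel word, so $\ell$ is the Christoffel word of some slope $p/q$ with $p+q=|\ell|$. It then remains to show that the Christoffel words occurring as factors of $c_\alpha$ are exactly those indexed by the standard words: the $b$-density of any factor of $c_\alpha$ of length $m$ lies within $1/m$ of $\alpha$, which through the theory of best rational approximations of $\alpha$ forces $p/q$ into the list determined by the directive sequence, the corresponding Christoffel word being the Lyndon conjugate of the appropriate $s_n$. The two inclusions together give item~4; for this last inclusion I would otherwise simply invoke \cite{jBaD97stur,aD97stur}.

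The main obstacle is precisely this converse inclusion in item~4. It rests on two non-routine ingredients: first, the structural characterisation that a balanced Lyndon word is a Christoffel word; and second, the identification of exactly which Christoffel words --- equivalently, which slopes --- occur as factors of $c_\alpha$, which requires the best-approximation property of continued-fraction convergents together with careful bookkeeping of which conjugate of each $s_n$ is the Lyndon one. These are the steps I would need to develop in detail, and they are the reason the statement is attributed to \cite{jBaD97stur,aD97stur} rather than proved here; the inductions underlying items~1--3 I do not expect to cause trouble.
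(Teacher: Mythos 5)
The paper gives no proof of this lemma at all --- it is stated as a catalogue of known results, each item justified only by the citations to de Luca--Mignosi, Glen's thesis, and Berstel--de Luca/de Luca --- so your plan of sketching the standard arguments and ultimately deferring the substantive fourth item to \cite{jBaD97stur,aD97stur} matches the paper's treatment. Your sketches are consistent with those sources; the only small slip is in item~2, where the two-palindrome factorisation $s_n = u_n v_n$ is a factorisation of the standard word $s_n$ itself (the lengths $q_{n-1}-2$ and $q_n-q_{n-1}+2$ sum to $q_n$), not of the central word $p_n$, and in item~3 your argument yields conjugacy of $\rev{s_n}$ and $s_n$ but pins down the exact index $q_n-2$ only after fixing the shift convention.
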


The following lemma is a generalisation of \cite[Lemma 8]{kS14lynd}.

\begin{lemma} \label{L:characteristic}  Let $c_\alpha$ be the characteristic Sturmian word of slope $\alpha = [0; 1+d_1, d_2, d_3, \ldots]$ with $c_\alpha = \lim_{n\to\infty}s_n$ where $s_n = p_nxy$ with $xy \in \{ab, ba\}$. The Lyndon conjugate of $s_n$ is the word $ap_nb$ for all $n \geq 1$. Moreover, every Lyndon factor of $c_\alpha$ that is shorter than $ap_nb$ is either a prefix or a suffix of $ap_nb$.
\end{lemma}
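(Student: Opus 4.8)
The plan is to prove the two assertions in turn, relying on the structural facts about standard words collected in Lemma~\ref{L:s_n}. For the first assertion, I would start from the decomposition $s_n = p_nxy$ with $xy \in \{ab,ba\}$, where $p_n$ is a palindrome, provided by the second bullet of Lemma~\ref{L:s_n}. First I would verify directly that $ap_nb$ is a conjugate of $s_n$: when $n$ is odd, $s_n = p_n ab$, and cyclically shifting the final $b$ to the front gives $b p_n a$, which is the reversal of $a p_n b$ since $p_n$ is a palindrome; by the third bullet of Lemma~\ref{L:s_n}, the conjugacy class of $s_n$ is closed under reversal, so $a p_n b$ is itself a conjugate of $s_n$. (The even case is symmetric, shifting the final $a$ and using that $p_n$ is a palindrome.) Then I would argue that $a p_n b$ is the \emph{Lyndon} conjugate: since $s_n$ is primitive (first bullet of Lemma~\ref{L:s_n}), its conjugacy class contains a unique Lyndon word; and $a p_n b$ begins with the smallest letter $a$ and ends with the largest letter $b$, and a standard word on $\{a,b\}$ contains both letters, so no conjugate can be lexicographically smaller than one that starts with $a$ and has $b$ only at the very end — more carefully, any conjugate starting with $a$ must, to beat $a p_n b$, have its letter-$a$ prefix at least as long, but since $a p_n b$ already places every $b$ except the last as late as a palindromic core allows, one checks $a p_n b$ is the minimal conjugate. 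I expect this last lexicographic minimality step to need a short but careful argument rather than a one-liner.

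For the ``moreover'' part, let $w$ be a Lyndon factor of $c_\alpha$ with $|w| < |a p_n b| = q_n$. If $|w| = 1$ then $w = a$, which is trivially a prefix of $a p_n b$, so assume $|w| \geq 2$. By the fourth bullet of Lemma~\ref{L:s_n}, $w$ is the Lyndon conjugate of some standard word $s_k$ with $k \geq 1$; since $|s_k| = q_k < q_n$ and the $q_i$ are strictly increasing for $i \geq 1$ (as $q_i = a_i q_{i-1} + q_{i-2}$ with $a_i \geq 1$), we have $k < n$ (with the mild caveat about $q_1 = a_1$ possibly equalling $q_0$ when $a_1 = 1$, i.e.\ $d_1 = 0$, which I would handle by noting $s_1 = a^{d_1} b$ reduces to the first letter-block case). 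By the first assertion applied to $s_k$, we have $w = a p_k b$ where $s_k = p_k x'y'$ and $p_k$ is a palindrome. So the claim reduces to: for $k < n$, the word $a p_k b$ is either a prefix or a suffix of $a p_n b$. The key input here is that $p_k$ is a prefix of $p_n$ up to parity: recall $s_{k}$ is a prefix of $s_n$ for all $k \le n$, and $p_n$ (respectively $p_k$) is obtained from $s_n$ (resp.\ $s_k$) by deleting the last two letters. Since $s_k$ is a prefix of $s_n$, the palindrome $p_k$ is a prefix of $s_n$; I would then use palindromicity — $p_n$ palindrome, $p_k$ palindrome and a prefix of $p_n$ — to conclude that $p_k$ is simultaneously a prefix and a suffix of $p_n$ (a palindromic prefix of a palindrome is also a suffix), whence $a p_k b$ is a prefix of $a p_n b$ if they agree at the ``$b$'' end, and a suffix if they agree at the ``$a$'' end; one of the two must occur because $a p_k b = a p_k b$ sits inside $a (p_n) b$ aligned at either end once we know $p_k$ occurs at both ends of $p_n$.

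The step I expect to be the main obstacle is the bookkeeping in that last paragraph: showing cleanly that $a p_k b$ is a prefix \emph{or} a suffix of $a p_n b$ — not merely that $p_k$ occurs at both ends of $p_n$, but that the flanking letters $a$ and $b$ line up correctly so that the whole word $a p_k b$ (not just its palindromic interior) is a genuine prefix or suffix. This requires tracking which of $ab$, $ba$ terminates $s_k$ versus $s_n$, i.e.\ the parities of $k$ and $n$, and checking the four cases; the palindrome length formulas $|u_n| = q_{n-1}-2$, $|v_n| = q_n - q_{n-1}+2$ from Lemma~\ref{L:s_n} may be needed to pin down exactly where $p_k$ sits inside $p_n$. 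I would organise this as: (i) $p_k$ is a prefix of $p_n$; (ii) hence (palindrome argument) a suffix of $p_n$; (iii) case analysis on parities to attach the boundary letters, concluding prefix in the matched-parity-at-one-end subcases and suffix in the others, with the observation that these subcases are exhaustive because $a p_k b$ is borderless (Lyndon words are borderless, as noted in the introduction) so it cannot straddle the middle of $a p_n b$.
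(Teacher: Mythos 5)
Your skeleton matches the paper's proof (conjugacy via the palindrome $p_n$ and reversal-closure of the conjugacy class; reduction of the ``moreover'' part to the Lyndon conjugates $a p_k b$ of the standard words $s_k$ with $k<n$), but the two steps you yourself flag as ``needing care'' are exactly where the content lies, and as sketched neither goes through. First, the Lyndon-ness of $a p_n b$: your heuristic that no conjugate can beat one ``that starts with $a$ and has $b$ only at the very end'' is not valid, since $a p_n b$ in general has many $b$'s inside $p_n$, and first-letter-$a$/last-letter-$b$ does not single out the Lyndon conjugate (e.g.\ $aabab$ and $abaab$ are conjugate, both start with $a$ and end with $b$, and only one is Lyndon). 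That $a p_n b$ --- a Christoffel word --- is Lyndon is a genuine theorem; the paper does not reprove it but cites \cite{jBaD97stur,aD97stur}, and you should either do the same or supply an actual argument, which will be more than a ``short but careful'' lexicographic check.

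Second, in the ``moreover'' part your key input --- $p_k$ is a palindromic prefix, hence also a suffix, of $p_n$ --- is genuinely insufficient, as you suspect: it says nothing about the letter \emph{following} the prefix occurrence of $p_k$ in $p_n$ (you need it to be $b$) or \emph{preceding} the suffix occurrence (you need it to be $a$), and neither the length formulas for $u_n,v_n$ nor borderlessness of $a p_k b$ supplies this (borderlessness is irrelevant here, since nothing forces an occurrence to sit at an end). The paper's resolution is to use the stronger fact that for $2 \le k < n$ the \emph{whole} standard word $s_k$, final two letters included, is a prefix of $p_n$; then the parity split is immediate: if $s_k = p_k b a$, then $p_k b$ is a prefix of $p_n$, so $a p_k b$ is a prefix of $a p_n b$; if $s_k = p_k a b$, then since $p_n$ and $p_k$ are palindromes the reversal $b a p_k$ of $s_k$ is a suffix of $p_n$, so $a p_k$ is a suffix of $p_n$ and $a p_k b$ is a suffix of $a p_n b$. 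The small cases $k=-1,0,1$ are handled directly ($s_1 = a^{d_1} b$ is its own Lyndon conjugate, a prefix of $a p_n b$ if $d_1 \ge 1$ and a suffix if $d_1 = 0$), which also repairs your $q_1=q_0$ caveat; and note your length-one case omits $w = b$, which is a Lyndon factor and, trivially, a suffix of $a p_n b$.
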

\begin{proof}
%By Lemma \ref{L:s_n}, $ap_nb$ is the Lyndon conjugate of $s_n$.
First we show that, for all $n\geq 1$, the Lyndon conjugate of $s_n$ is the word $ap_nb$. If $s_n = p_nba$, then $ap_nb$ is clearly a conjugate of $s_n$ and it is Lyndon \cite{jBaD97stur,aD97stur}. On the other hand, if $s_n = p_nab$, then $bp_na$ is a clearly a conjugate of $s_n$, and since the conjugacy class of $s_n$ is closed under reversal and $p_n$ is a palindrome (by Lemma~\ref{L:s_n}), it follows that $ap_nb$ is a conjugate of $s_n$ and it is Lyndon \cite{jBaD97stur,aD97stur}.

To prove the second claim, it suffices to show that if $k<n$, then the Lyndon conjugate of $s_k$ is a prefix or suffix of $ap_nb$ (since, by Lemma~\ref{L:s_n}, the Lyndon factors of $c_\alpha$ of length at least $2$  are precisely the Lyndon conjugates of the (primitive) standard words in $c_\alpha$).   The claim is true for $k = -1$ and $k=0$ since $s_{-1} = b$ and  $s_0=a$. It is also true for $k=1$ because $s_1=a^{d_1}b$ is the Lyndon conjugate of itself, and is a prefix of $ap_nb$ if $d_1 \geq 1$ and a suffix of $ap_nb$ if $d_1 =0$. Now suppose that $k \geq 2$. Then $k<n$ implies that $s_k$ is a prefix of $p_n$. Furthermore, since $p_n$ is a palindrome, the reversal of $s_k$ is a suffix of $p_n$. Therefore if $s_k = p_kba$, then its Lyndon conjugate $ap_kb$ is a prefix of $ap_nb$; otherwise, if $s_k = p_kab$, then its Lyndon conjugate $ap_kb$ is a suffix of $ap_nb$.
\end{proof}

The Lyndon factors of (characteristic) Sturmian words of length at least $2$ (i.e., the Lyndon conjugates of standard words) over $\{a,b\}$ are precisely the so-called \textit{Christoffel words} beginning with the letter~$a$ (see, e.g., the nice survey \cite{jB07stur}). Christoffel words take the form $aPal(v)b$ and $bPal(v)a$ where $v \in \{a,b\}^*$ and $Pal$ is \textit{iterated palindromic closure}, defined by:
\[
Pal(\empt) = \empt  \quad \mbox{and} \quad  Pal(wx) = (Pal(w)x)^+ \quad \mbox{for any finite word $w$ and letter $x$},
\]
where $u^+$ denotes the shortest palindrome beginning with $u$ (called the \textit{palindromic closure} of $u$). For example, $Pal(aba) = \underline{a}\underline{b}a\underline{a}ba$ where the underlined letters indicate the points at which palindromic closure is applied.

Let $p$, $q$ be co-prime integers with $0 < p < q$. The rational $p/q$ has two distinct simple continued fraction expansions:
\[
p/q = [0; 1+d_1, d_2, \ldots, d_n, 1] = [0; 1+d_1, d_2, \ldots, d_n + 1]
\]
where $d_1 \geq 0$ and all other $d_i \geq 1$. The so-called \textit{Christoffel word of slope $p/q$} beginning with the letter $a$ is the unique Sturmian Lyndon word over $\{a,b\}$ of length $q$ containing $p$ occurrences of the letter $b$, given by:
\[
aPal(v)b \quad \mbox{with} \quad v = a^{d_1}b^{d_2}a^{d_3}\cdots x^{d_n} \quad \mbox{where} \quad x = \begin{cases} a \quad \mbox{if $n$ is odd}, \\ b \quad  \mbox{if $n$ is even}. \end{cases}
\]
For example, the Christoffel word of slope $2/5=[0;2,1,1] = [0;2,2]$ beginning with the letter $a$ is $aPal(ab)b = aabab$.  %For more about Christoffel words and the ``central words'' $Pal(v)$ where $v \in\{a,b\}^*$, see the nice survey \cite{jB07stur}.

\begin{remark} \label{R1} Note, in particular, that the Christoffel word of slope $p/q = [0; 1+d_1, d_2, \ldots, d_n, 1]$ beginning with the letter $a$ is precisely the Lyndon conjugate of the standard word $s_{n+1}=s_ns_{n-1}$ where $s_{-1} =b$, $s_{0}=a$, and $s_i=s_{i-1}^{d_i}s_{i-2}$ for $1 \leq i \leq n$ (see, e.g., \cite{jB07stur}).
\end{remark} 

\begin{example} For all $n \geq 1$, the \textit{Fibonacci Lyndon word of length $F_n$} (i.e., the Lyndon conjugate of the finite Fibonacci word $f_n$)  is the Christoffel word of slope $F_{n-2}/F_{n} = [0;2,\underbrace{1,1, \ldots, 1}_{\mbox{\small $(n-1)$ $1$s}}]$ beginning with $a$. 
\end{example}

Saari \cite[Thm.~1]{kS14lynd} proved that if $w$ is a Lyndon word with $|w| \ge F_{n}$ for some $n \geq 1$, then $w$ contains at least $n+2$ distinct Lyndon factors, with equality if and only if $w$ is the Fibonacci Lyndon word of length $F_{n}$. For example, $aPal(ab)a = aabab$ is the Fibonacci Lyndon word of length $F_3=5$ and contains the minimum number ($3+2 = 5$) of distinct Lyndon factors over all Lyndon words of the same length. Saari also made the following conjecture.

\begin{conjecture} \cite{kS14lynd} If $w$ is a Lyndon word of length $n$, $n \not = 6$, containing the least number of distinct Lyndon factors over all Lyndon words of the same length, then $w$ is a \textit{Christoffel word}.
\end{conjecture}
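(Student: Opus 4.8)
Since the abstract promises a counterexample, the plan is to \emph{disprove} this conjecture rather than prove it, and a single well-chosen length will suffice. The conjecture fails as soon as we exhibit one $n\neq 6$ and a Lyndon word $w$ of length $n$ that attains the minimum number of distinct Lyndon factors over all Lyndon words of length $n$ yet is not a Christoffel word. The excluded value $n=6$ is the crucial hint. Over $\{a,b\}$ there are exactly nine Lyndon words of length $6$, and a direct check shows that $aaaaab$, $aaabab$, $aabbab$, $ababbb$ and $abbbbb$ each have $7$ distinct Lyndon factors while the other four have $8$, $10$, $10$ and $11$; so the minimum is $7$ and it is attained five times. Of these five, only $aaaaab$ (slope $1/6$) and $abbbbb$ (slope $5/6$) are Christoffel; the remaining three are primitive but not balanced --- $aabbab$, for instance, contains both $aa$ and $bb$ --- hence they are not factors of any Sturmian word and so are not Christoffel. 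Thus $n=6$ already refutes the unrestricted statement, which strongly suggests the conjecture is merely an artefact of the sparse set of lengths (the Fibonacci numbers) at which Saari's theorem forces a \emph{unique} minimiser. The first task is therefore to locate the next length where this collision recurs.

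I would carry this out by a finite search organised around the material of this section. Restricting to a binary alphabet suffices: one readily checks that the minimum number of distinct Lyndon factors among length-$n$ Lyndon words is attained over $\{a,b\}$ for every $n\geq 2$ (a Lyndon word using $k\geq 3$ distinct letters already carries $k$ distinct single-letter Lyndon factors, and more as $k$ grows), and there are only $\tfrac1n\sum_{d\mid n}\mu(n/d)2^d$ Lyndon words of length $n$ to inspect. On the other side, the Christoffel words of length $n$ form a very short explicit list --- one for each $p$ with $0<p<n$ and $\gcd(p,n)=1$, realised (as in Remark~\ref{R1}) as the Lyndon conjugate of the standard word obtained from the continued-fraction expansion of $p/n$ --- and the generalisation of Saari's formula established in this section outputs the number of distinct Lyndon factors of each of them. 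The procedure is then: for $n=7,8,9,\ldots$ (skipping $6$), enumerate all length-$n$ Lyndon words over $\{a,b\}$, compute for each the set of its Lyndon factors, determine the minimum $m(n)$, and test whether \emph{every} word achieving $m(n)$ is Christoffel. The counterexample is the first $n$ for which that test fails, together with any non-Christoffel $w$ attaining $m(n)$; the write-up then simply displays $w$, lists its distinct Lyndon factors, exhibits a Christoffel word of length $n$ with the same count, and records that no length-$n$ Lyndon word has fewer.

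The genuine obstacle is certifying that $w$ is a \emph{global} minimiser. At a Fibonacci length $F_k$ this is handed to us by Saari's theorem, but at a non-Fibonacci length there is no closed form, and the naive bound Saari's theorem yields --- at least $k+3$ distinct Lyndon factors when $F_k<n<F_{k+1}$ --- is in general not tight (at $n=6$ it gives $6$ whereas the truth is $7$). So one either relies on the exhaustive enumeration above or, for a cleaner argument, sharpens the lower-bound half of Saari's proof: peel off the Lyndon-suffix decomposition, induct, and track how the lengths of the successive Lyndon pieces obey a Fibonacci-type recursion, pushing the estimate far enough to pin down the exact minimum at the relevant length rather than merely the Fibonacci bound. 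A secondary and much easier point is to certify that the candidate $w$ is not Christoffel: since Christoffel words are precisely the Lyndon factors of Sturmian words, it is enough to exhibit a failure of balancedness in $w$ --- two equal-length factors whose numbers of $b$'s differ by more than one, e.g.\ $w$ containing both $aa$ and $bb$ --- which for the short, nearly-Sturmian Lyndon words that surface in the search is immediate.
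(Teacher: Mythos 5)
Your strategy --- refute the conjecture by producing one length $n\neq 6$ and a Lyndon word of that length that attains the minimum number of distinct Lyndon factors yet is not Christoffel --- is exactly the paper's strategy, but your write-up stops at the plan. You describe a search procedure (enumerate binary Lyndon words of length $n$, compute the minimum $m(n)$, test the minimisers) without ever naming a length, exhibiting a word, or certifying a minimum; yet that exhibition is the entire mathematical content of the disproof. The $n=6$ discussion cannot substitute for it, since $n=6$ is excluded from the conjecture's hypothesis. Moreover the hard step you yourself flag --- certifying that a candidate is a \emph{global} minimiser at a non-Fibonacci length, where Saari's bound is not tight --- is precisely what is left undone: ``rely on exhaustive enumeration'' or ``sharpen the lower-bound half of Saari's proof'' are proposals, not arguments. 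The paper settles the matter concretely: the Lyndon word
\[
aabaababaabaabababaabaababab
\]
of length $28$ has $10$ distinct Lyndon factors, which is the minimum over all Lyndon words of length $28$ (a minimum also attained by the Christoffel word $aabaababaababaababaababaabab$ of slope $11/28=[0;2,1,1,4,1]$), and it is not Christoffel, as one sees by comparing its prefix of length $5$ with its suffix of length $5$. Without some such explicit word together with a verified minimum, the conjecture has not been refuted.

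A secondary weakness: your reduction to the binary alphabet (``a Lyndon word using $k\geq 3$ letters already carries $k$ single-letter Lyndon factors'') does not by itself show that the minimum over \emph{all} Lyndon words of length $n$ is attained by a binary word --- note that the paper's own length-$6$ list includes ternary minimisers such as $ababac$, so larger alphabets can tie the minimum, and your argument does not rule out their beating it at some length. Since the counterexample must minimise over all Lyndon words of its length, a binary-only enumeration needs this justification (or the enumeration must be run over all alphabets up to the relevant size) before the certification of minimality is complete.
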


The number 6 is excluded because the following words all contain 7 distinct Lyndon factors, which is the minimum for length 6 words, and only the first and last are Christoffel:
\[
aaaaab, \quad  aaabab , \quad  aabbab , \quad  ababbb , \quad  ababac , \quad  abacac , \quad  acbacc , \quad  abbbbb.
\]

However the conjecture is not true.  The following Lyndon word has length 28 and contains 10 distinct Lyndon factors --- the minimum number of distinct Lyndon factors in a Lyndon word of this length --- but it is not Christoffel (compare the prefix of length 5 to the suffix of length 5):
\[
aabaababaabaabababaabaababab
\]

The minimum of 10 distinct Lyndon factors for a Lyndon word of length $28$ is also attained by the Lyndon word $aabaababaababaababaababaabab$ which is the Christoffel word $aPal(abab^4)b$ of slope $11/28=[0;2,1,1,4,1]$.

We now generalise Saari's result on the number of distinct Lyndon factors in a Fibonacci Lyndon word by determining the number of distinct Lyndon factors in a given Christoffel word. Let $\mathcal{L}(w)$ denote the number of distinct Lyndon factors in a word $w$.

\begin{theorem}  \label{T:christoffel}
If $w$ is a Sturmian Lyndon word on $\{a,b\}$ with $a < b$, i.e., a Christoffel word (beginning with the letter $a$) of slope $p/q=[0; 1+d_1, d_2, \ldots, d_n, 1]$ for some co-prime integers $p$, $q$ with $0 < p < q$, then $\mathcal{L}(w) = d_1+d_2+\cdots+d_n+3$.
\end{theorem}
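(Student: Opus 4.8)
The plan is to pin down the set of Lyndon factors of $w$ explicitly. By Remark~\ref{R1}, $w$ is the Lyndon conjugate of the standard word $s_{n+1}=s_ns_{n-1}$, where $s_{-1}=b$, $s_0=a$ and $s_i=s_{i-1}^{d_i}s_{i-2}$ for $1\le i\le n$; I set $d_{n+1}:=1$ so that the final step reads $s_{n+1}=s_n^{d_{n+1}}s_{n-1}$. For $1\le m\le n+1$ and $1\le j\le d_m$ put $t_{m,j}:=s_{m-1}^{\,j}s_{m-2}$. Each $t_{m,j}$ is again a standard word (namely the word of index $m$ in the standard sequence directed by $(d_1,\dots,d_{m-1},j)$), hence primitive by Lemma~\ref{L:s_n}, so it has a unique Lyndon conjugate $\ell_{m,j}$, which is a Christoffel word; note $t_{m,d_m}=s_m$, and in particular $t_{n+1,1}=s_{n+1}$, so $\ell_{n+1,1}=w$. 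The claim I would prove is that the distinct Lyndon factors of $w$ are exactly the letters $a,b$ together with the pairwise distinct words $\ell_{m,j}$ ($1\le m\le n+1$, $1\le j\le d_m$). Since $\sum_{m=1}^{n+1}d_m=(d_1+\cdots+d_n)+d_{n+1}=d_1+\cdots+d_n+1$, this immediately gives $\mathcal L(w)=d_1+\cdots+d_n+3$.

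For the easy inclusion, I would argue as follows. The letters $a$ and $b$ are Lyndon factors of $w$ (it begins with $a$ and, as $0<p<q$, contains $b$). Fix an irrational $\alpha$ whose directive sequence begins $(d_1,\dots,d_n,1,\dots)$ (any positive continuation), so that $s_{n+1}$ — and, since $s_{m-2}$ is a prefix of $s_{m-1}=s_{m-2}^{d_{m-1}}s_{m-3}$, each $t_{m,j}=s_{m-1}^{\,j}s_{m-2}$ (a prefix of $s_{m-1}^{d_m}s_{m-2}=s_m$, hence of $s_{n+1}$) — is a prefix of $c_\alpha$. As $c_\alpha$ is recurrent, $t_{m,j}^{\,2}$ occurs in $c_\alpha$, so every conjugate of the primitive word $t_{m,j}$ occurs in $c_\alpha$; in particular $\ell_{m,j}$ is a Lyndon factor of $c_\alpha$ of length $|t_{m,j}|\le|s_{n+1}|=|w|$, and likewise $w=\ell_{n+1,1}$ itself occurs in $c_\alpha$. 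By Lemma~\ref{L:characteristic} (applied to $c_\alpha$ with $w=ap_{n+1}b$), each $\ell_{m,j}$ with $|\ell_{m,j}|<|w|$ is a prefix or a suffix of $w$, and the only Lyndon factor of $c_\alpha$ of length exactly $|w|$ is $w$ itself; in every case $\ell_{m,j}$ is a Lyndon factor of $w$.

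For the reverse inclusion and for distinctness: any Lyndon factor $L$ of $w$ is a Lyndon factor of $c_\alpha$, so if $|L|\ge2$ then, by Lemma~\ref{L:s_n}, $L$ is the Lyndon conjugate of a standard word occurring in $c_\alpha$, and the standard words occurring in $c_\alpha$ are precisely $a$, $b$ and the blocks $t_{m,j}$ ($m\ge1$, $1\le j\le d_m$). The lengths $|t_{m,j}|$ are strictly increasing in $(m,j)$ lexicographically — within a fixed $m$ clearly, and $|t_{m-1,d_{m-1}}|=|s_{m-1}|<|s_{m-1}|+|s_{m-2}|=|t_{m,1}|$ across consecutive blocks — so $|L|\le|w|=|t_{n+1,1}|$ forces $m\le n+1$, giving $L=\ell_{m,j}$ for some admissible $(m,j)$; the same strict monotonicity shows the $\ell_{m,j}$ are pairwise distinct. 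Counting $\sum_{m=1}^{n+1}d_m$ of them plus the letters $a,b$ then finishes the proof.

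The main obstacle is the structural input used in the previous paragraph — that the only standard words occurring in $c_\alpha$ are $a$, $b$ and the blocks $t_{m,j}$, i.e. that Lemma~\ref{L:s_n}'s phrase ``standard words $s_n$'' must be read as including the intermediate words $s_{m-1}^{\,j}s_{m-2}$ with $j<d_m$ — which has to be extracted carefully from the combinatorics of Sturmian words, along with the bookkeeping of the degenerate cases ($d_1=0$, so that $s_1=b$ and there are no blocks $t_{1,j}$; the length-one factors; uniqueness of the length-$|w|$ factor). An essentially equivalent but more self-contained route would use Lemma~\ref{L:characteristic} to reduce $\mathcal L(w)$ to $2$ (for $a,b$) plus $1$ (for $w$) plus the number of proper Lyndon prefixes and proper Lyndon suffixes of $w$ of length $\ge2$ — these two families being disjoint since a Lyndon word is borderless — and then show directly from the recursive palindromic form $w=a\,Pal(a^{d_1}b^{d_2}\cdots)\,b$ that there are $d_1+\cdots+d_n$ of them in total.
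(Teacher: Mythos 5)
Your proposal is essentially the paper's proof, carried out with more scaffolding: both arguments identify $w$ with the Lyndon conjugate of $s_{n+1}=s_ns_{n-1}$ via Remark~\ref{R1}, enumerate the Lyndon factors of $w$ as $a$, $b$, $w$ itself, and the Lyndon conjugates of the intermediate words $s_{i-1}^{j}s_{i-2}$ for $1\le j\le d_i$, use Lemma~\ref{L:characteristic} to place the proper ones as prefixes or suffixes, and count $3+\sum_{i=1}^n d_i$. The structural input you flag as the ``main obstacle'' --- that Lemma~\ref{L:s_n} must be read as saying the Lyndon factors of $c_\alpha$ of length at least $2$ are the Lyndon conjugates of all these intermediate blocks, not only of the $s_k$ themselves --- is exactly what the paper's proof leans on as well, via its citations to Berstel--de Luca and de Luca.

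Two of your justifications do not hold as written, though both are repairable. First, recurrence of $c_\alpha$ does not imply that the square of a prefix occurs: the Fibonacci word is recurrent, its prefix $abaaba$ of length $6$ recurs infinitely often, yet $(abaaba)^2$ is not a factor, since squares in the Fibonacci word only have Fibonacci-number periods. To get all conjugates of $t_{m,j}$ into $c_\alpha$ you need the specific structure (for instance, $s_m^2$ is a factor of $c_\alpha$ and, for $m\ge 2$ with the $d_1=0$ case handled separately, contains $(s_{m-1}^{\,j}s_{m-2})^2$ for $1\le j\le d_m$), or you can simply invoke the cited characterisation of the Lyndon factors of Sturmian words as Christoffel words of the appropriate slopes, which is what the paper does. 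Second, your claim that the standard words occurring in $c_\alpha$ are precisely $a$, $b$ and the blocks $t_{m,j}$ is literally false: $aab$ and $ba$ are standard words (for the directive sequences $(2)$ and $(0,1)$) occurring in the Fibonacci word, but they are not blocks $t_{m,j}$ for its directive sequence. This is harmless because conjugate standard words share the same Lyndon conjugate, so the statement you actually need --- that the Lyndon factors of $c_\alpha$ of length at least $2$ are exactly the $\ell_{m,j}$ --- survives, but it should be phrased at the level of Lyndon conjugates rather than standard factors. With those repairs, and the $d_1=0$ bookkeeping you already note (where $t_{2,j}=b^{j}a$ with $j<d_2$ is a factor but not a prefix of $c_\alpha$), your argument is correct and coincides with the paper's; your length-monotonicity argument for distinctness is a detail the paper leaves implicit.
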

\begin{proof} The word $w$ is the Lyndon conjugate of the standard word $s_{n+1} = s_{n}s_{n-1}$ with $s_{-1}=b$, $s_0=a$, and $s_i = s_{i-1}^{d_i}s_{i-2}$ for $1 \leq i \le n$ (see Remark~\ref{R1}). By Lemma \ref{L:characteristic}, the Lyndon conjugates of $s_i$ for $1 \leq i \leq n$ are either prefixes or suffixes of $w$. Moreover, for each $i$ with $1 \leq i \leq n$, the standard word $s_i$ contains $d_{i}$ distinct Lyndon factors of lengths $|s_{i-1}^ms_{i-2}|$ for $m= 1, 2, \ldots, d_i$. By Lemma~\ref{L:s_n}, these are the only Lyndon factors of the Lyndon word $w$ besides itself and the two letters $a$ and $b$. Hence $\mathcal{L}(w) = (d_1 + d_2 + \cdots + d_n) + 3$.
\end{proof}

The above result is a generalisation of \cite[Lemma 9]{kS14lynd}, which reworded (with the indexing of Fibonacci words and numbers shifted back by $2$) states that if $w$ is the Fibonacci Lyndon word of length $F_{n-2}$ for some $n\geq 3$, i.e., the Christoffel word of slope 
\[
F_{n-4}/F_{n-2} = [0;2,\underbrace{1,1, \ldots, 1}_{\mbox{$(n-3)$ $1$s}}]
\]
beginning with the letter $a$, then $\mathcal{L}(w) = (n-3)+3 = n$.

\textbf{Examples:}

\begin{itemize}
\item The Christoffel word of slope $2/5=[0;2,1,1]$ beginning with the letter $a$, namely $aPal(ab)b = aabab$, is the \textit{Fibonacci Lyndon word} of length $F_3=5$ that contains the minimum number ($5 = 2 + 3$) of distinct Lyndon factors for its length.

\item The Christoffel word of slope $1/6=[0;5,1]$ beginning with the letter $a$, namely  $aPal(a^4)b = aaaaab$, is a Sturmian Lyndon word of length $6$ containing the minimum number ($7=4+3$) of distinct Lyndon factors for its length.

\item The Christoffel word of slope $3/8=[0;2,1,1,1]$ beginning with the letter $a$, namely $aPal(aba)b = aabaabab$, is the Fibonacci Lyndon word of length $F_4 = 8$ containing the minimum number ($6 = 3 + 3$) of distinct Lyndon factors for its length.

%\item The Christoffel word of slope $5/11=[0;2,4,1]$ is $aPal(ab^4)b = aababababab$ --- a Sturmian Lyndon word of length $11$ containing $8 = 5 + 3$ distinct Lyndon factors.
\end{itemize}

%We make the following conjecture.

%\begin{conjecture}
%If $w$ is a Sturmian Lyndon word on $\{a,b\}$ with $a < b$ (i.e., a Christoffel word beginning with the letter $a$) of slope $p/q=[0; 1+ d_1, d_2, \ldots, d_n, 1]$ with $0< p <q$ and $\gcd(p,q)=1$, then $w$ contains the minimum number of distinct Lyndon factors over all binary Lyndon words of the same length $q$ and containing $p$ occurrences of the letter $b$ (namely, $\mathcal{L}(w) = d_1+d_2+\cdots+d_n+3$).
%\end{conjecture}

\section{Open Problems}
\label{sect-6}

\begin{description}
\item[Open Problem 1] One might suspect that any given Christoffel word contains the minimum number of distinct Lyndon factors over all Lyndon words of the same length. However, this is not true. For instance, the Christoffel word of slope $5/11=[0;2,4,1]$ beginning with the letter $a$, namely $aPal(ab^4)b = aababababab$, contains $8$ ($= 5 + 3$) distinct Lyndon factors, but the minimum number of distinct Lyndon factors of a Lyndon word of length $11$ is actually~$7$. This minimum is attained by the Christoffel word $aPal(a^2ba)b = aaabaaabaab$ of slope $3/11=[0;3,1,1,1]$.

Is it true that the minimum number of distinct Lyndon factors over all Lyndon words of the same length is attained by at least one Christoffel word of that length?

\item[Open Problem 2] Tables \ref{table2} and \ref{table3}, showing values for $ET(\sigma,n)$ and $ED(\sigma,n)$, raise the question of whether there may exist asymptotic formulas for these quantities, simpler than the exact values displayed in equations (\ref{e2}) and (\ref{e6}), respectively.
\end{description}

 \end{document}